\renewcommand{\PrintDOI}[1]{\doi{#1}}
\theoremstyle{plain}
\newtheorem{thm}{Theorem}[section]
\newtheorem{claim}[thm]{Claim}
\newtheorem{prop}[thm]{Proposition}
\newtheorem*{prop*}{Proposition}
\newtheorem*{seged*}{Sublemma}
\newtheorem{lem}[thm]{Lemma}
\newtheorem*{cond*}{Sondition}
\newtheorem*{lem*}{Lemma}
\theoremstyle{definition}
\newtheorem*{defn*}{Definition}
\newtheorem{fel*}[thm]{Exercise}
\newtheorem*{megf*}{Observation}
\theoremstyle{remark}
\newtheorem{rem}[thm]{Remark}
\newtheorem*{rem*}{Remark}
\newenvironment{sbiz}{\par\noindent{\itshape Proof:}\ }{\rule{1ex}{1ex}}
\title{The Complete Lattice of Erd\H os-Menger Separations}
\author{Attila Joó \thanks{Affiliation: University of Hamburg and Alfréd Rényi Institute of Mathematics. Funding was provided 
by the Alexander von Humboldt Foundation.
 Email: {\tt attila.joo@uni-hamburg.de
 } }}
\date{2019}
\begin{document}
\maketitle
\begin{abstract}
F. Escalante and T. Gallai studied in the seventies the structure of different kind of  separations and cuts
between a vertex pair in a (possibly infinite) graph. One of their results is that if there is a finite separation, then the 
optimal (i.e. minimal sized) separations form a finite distributive lattice with respect to a natural partial order. Furthermore, 
any finite
distributive lattice can be 
represented this way. 

If there is no finite separation then cardinality is a too rough measure to capture  being ``optimal''. 
Menger's theorem provides a structural characterization of optimality if there is a finite separation.  We use this 
characterization to define  Erd\H os-Menger 
separations even if there is no finite separation. The generalization of Menger's theorem to infinite 
graphs (which was not available until 2009)  ensures that  Erd\H os-Menger 
separations always exist.
We show that they  form a 
complete lattice with respect to the partial order given by Escalante and every complete lattice can be represented this way.
\end{abstract}

\section{Introduction}

The investigation of the structure of several type of separations (i.e. vertex cuts) and cuts in graphs  has been started  in the 
seventies by 
F. Escalante and T. Gallai. For their original papers see \cite{escalante1972schnittverbande} and 
\cite{escalante1974note} and for an English survey about these and further results in this area we recommend the chapter 
``Lattices Related to Separation  in  Graphs'' of \cite{sauer1993finite} by R. Halin. 

Among other results, it was 
discovered by Escalante that if there is a finite separation between two vertex sets in a given graph, then the 
optimal 
(minimal sized) separations form a finite distributive lattice with respect to a natural partial order. Furthermore, any finite 
distributive lattice can be 
represented this way. 
Without having a finite separation it was unclear which separations we should consider ``optimal''. By Menger's theorem, a 
finite  separation  $ S $ between two vertex sets
is optimal if and only if there is a system of  disjoint paths  joining them such that $ S $ consists of 
choosing exactly one
vertex from each of these paths. Based on this characterisation, the concept of optimal separation can be 
interpreted without having a  finite separation.  The generalization of Menger's theorem to infinite graphs (see 
\cite{aharoni2009menger}) ensures that this definition makes sense, this kind of  
separation always exists. Since the infinite version of Menger's  theorem was conjectured by P. Erd\H os,  we call them   
Erd\H os-Menger 
separations. Our main 
result is that the Erd\H os-Menger separations always form a complete lattice and every  complete lattice 
can be represented as an Erd\H os-Menger 
separation lattice.  We are working with 
digraphs but our results remain true in undirected graphs as well with obvious modification of the proofs. The paper is 
structured as follows. We introduce few notation in the next section. The main result is discussed in the third section. Finally 
at the Appendix we show by an example that to the contrary of the finite case the Erd\H os-Menger separation lattice is not 
necessarily a sublattice of the 
minimal separation lattice.
\section{Notation}
  Let $ D=(V,E) $ be a possibly infinite 
digraph and 
$ A, B\subseteq V $. Later we will omit $ D $ from our notation whenever it is fixed or clear from the context. A finite 
directed 
path $ P $  is an $ \boldsymbol{A \rightarrow B }$ \textbf{path} if exactly its first vertex is in $ A $ and exactly its last is in $ B $.  
Let $ 
\boldsymbol{\mathfrak{D}_D(A,B)} $ consist of the 
systems $ \mathcal{P} $ 
of (pairwise) disjoint $ A\rightarrow B $ paths. We write $ \boldsymbol{V_{\text{first}}(\mathcal{P}) }$ for the set of the first 
vertices of the paths in $ \mathcal{P} $ and we define $ \boldsymbol{V_{\text{last}}(\mathcal{P})}$ analogously.   Let us 
write 
$ \boldsymbol{\mathfrak{M}_D(A,B)} $ for the set of the minimal $ AB $-separations in $ D $, i.e., those $ S\subseteq V 
$ for
which every $ A\rightarrow B $ path in $ D $ meets $ S $ and $ S $ is $ \subseteq $-minimal with respect to this 
property.  We 
consider the following relation 
$\trianglelefteq $ on $ 
\mathfrak{M}_D(A,B) 
$.  Let $ 
S\trianglelefteq T $ if 
$ S $ separates $ T $ from $ A $ (i.e. $ S $ meets every $ A\rightarrow T $ path of $ D $). It is known that $ 
\mathfrak{M}_D(A,B)  $ is a complete lattice (see Proposition \ref{inf char}), we use \textbf{inf} and \textbf{sup} always 
with 
respect to this lattice.  A vertex set $ S $ is orthogonal to a 
system $ \mathcal{P} $ of disjoint 
paths (we write $ \boldsymbol{S\bot\mathcal{P} } $) if $ S $ consists of choosing exactly one vertex from each path of 
$ \mathcal{P} $. The formal definition of the \textbf{Erd\H os-Menger separations} is the following.

\[ \boldsymbol{\mathfrak{S}_{D}(A,B)}:=\{ S\in \mathfrak{M}_D(A,B): \exists 
\mathcal{P}\in \mathfrak{D}_D(A,B)\text{ 
 with 
}S\bot\mathcal{P} \}. \]

The non-emptiness of  $ \mathfrak{S}_{D}(A,B) $ in the general case is guaranteed by the Aharoni-Berger theorem (see 
\cite{aharoni2009menger}).  Finally let 
\begin{align*}
 &\boldsymbol{\mathfrak{S}^{-}_{D}(A,B)}:=\{S\in \mathfrak{M}_{D}(A,B): \exists \mathcal{P}\in 
 \mathfrak{D}_{D}(A,S)\text{ 
 with }
  V_{\text{last}}(\mathcal{P})=S  \}\\ 
  &\boldsymbol{\mathfrak{S}^{+}_{D}(A,B)}:=\{S\in \mathfrak{M}_{D}(A,B): \exists \mathcal{P}\in 
  \mathfrak{D}_{D}(S,B)\text{ 
  with }
   V_{\text{first}}(\mathcal{P})=S  \}.
 \end{align*} 

\section{Main result}
\subsection{Preliminaries}
We will need some of the basic facts discovered  by Escalante.  He formulated originally these results for undirected 
graphs and for separations between vertex pairs in his paper \cite{escalante1972schnittverbande} (which  is in German). We 
will give here all the necessary details to make the paper
 self-contained . From now on let a 
digraph $ D=(V,E) $ and $ A,B\subseteq V $ be fixed.  If a statement is ``symmetric'', then 
we prove just one half of it without mentioning this every time explicitly.

The role of $ A $ and $ B $ are seemingly not symmetric in the definition of $ \trianglelefteq $ (the definition based on $ A $ 
and does not mention $ B $). The following Proposition restore the symmetry. 
\begin{prop}\label{simm ord}
Let $ S,T\in \mathfrak{M}(A,B) $. Then $ T $ separates $ S $ from $ A $ if and only if $ S $ separates $ B$ from $ T$. 
\end{prop}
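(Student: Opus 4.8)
The plan is to unfold both sides of the equivalence into statements about the (non)existence of certain avoiding paths and then prove those two existence statements are equivalent. By definition, $T$ fails to separate $S$ from $A$ precisely when there is an $A\rightarrow S$ path $Q$ disjoint from $T$, and $S$ fails to separate $B$ from $T$ precisely when there is a $T\rightarrow B$ path $R$ disjoint from $S$. Taking contrapositives, it therefore suffices to establish the single equivalence: there is an $A\rightarrow S$ path avoiding $T$ if and only if there is a $T\rightarrow B$ path avoiding $S$.

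The only external inputs I would use are two elementary facts. First, the standard witnessing property of minimal separations: since $S\in\mathfrak{M}(A,B)$, for every $s\in S$ the set $S\setminus\{s\}$ is no longer separating, so there is an $A\rightarrow B$ path $P$ with $P\cap S=\{s\}$; the symmetric statement holds for $T$. Second, a separation meets not merely every $A\rightarrow B$ path but every walk from $A$ to $B$, since any such walk contains an $A\rightarrow B$ path on its vertex set.

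For the forward direction I would start from an $A\rightarrow S$ path $Q$ avoiding $T$ with last vertex $s\in S$, noting at once that $s\notin T$ because $Q$ is disjoint from $T$. I would then take the witnessing path $P$ for $s$ and let $P_2$ be its final segment from $s$ to its endpoint in $B$. The concatenation of $Q$ and $P_2$ is a walk from $A$ to $B$; since $Q$ avoids $T$ while every such walk must meet $T$, the tail $P_2$ meets $T$. Letting $t$ be the last vertex of $P$ lying in $T$, this $t$ sits on $P_2$ and, because $s\notin T$, strictly after $s$ in the path order. Consequently the subpath of $P$ running from $t$ to $B$ is a genuine $T\rightarrow B$ path, and because $P\cap S=\{s\}$ with $s$ preceding $t$, it avoids $S$. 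The reverse direction is completely symmetric, obtained by swapping $A\leftrightarrow B$, $S\leftrightarrow T$, and reversing all edges, so I would write out only this half explicitly.

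The calculations are routine; what needs care is the bookkeeping forced by the convention that an $A\rightarrow B$ path meets $A$ only in its first vertex and $B$ only in its last, so that the extracted segments are legitimate $T\rightarrow B$ (respectively $A\rightarrow S$) paths. The one genuinely load-bearing point is the strict separation in the path order between the cut vertex $s$ and the chosen $T$-vertex $t$: this is exactly what guarantees that the extracted subpath avoids $S$, and it is precisely the hypothesis $s\notin T$ (equivalently $t\notin S$ in the mirrored argument) that supplies it. I expect this small order-of-vertices argument, rather than any deeper structural input, to be the only place requiring attention.
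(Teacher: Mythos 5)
Your proof is correct and is essentially the paper's argument in mirror image: the paper directly shows every $T\rightarrow B$ path meets $S$ by prepending an $A\rightarrow T$ path obtained from the minimality of $T$, while you prove the contrapositive of the symmetric half by appending to your $A\rightarrow S$ path the tail of a witnessing $A\rightarrow B$ path obtained from the minimality of $S$; it is the same concatenation-plus-separation step, with the two halves interchangeable under the $A\leftrightarrow B$, $S\leftrightarrow T$ symmetry that the paper itself invokes. A minor point in your favour: you explicitly address that the concatenation is only a walk containing an $A\rightarrow B$ path, a detail the paper glosses over by calling the union of its two paths a path.
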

\begin{proof}
Assume that $ T $ separates $ S$ from $ A $. Let $ P $ be a $ T\rightarrow B $ path starting  at $ u $. Pick an $ A\rightarrow T $ 
path $ Q $ terminating at $ u $ (it exists by the 
minimality of $ T $). The path $ Q $ cannot meet $ S $ before $ u $ because $ T $ separates $ S $ from $ A$. Let $ R $ be the 
path 
that we obtain by uniting $ Q $ and $ P $. It is an $ A\rightarrow B $ path therefore it meets $ S $. Thus $ P $ meets $ S $.
\end{proof}
\begin{prop}
$ \trianglelefteq $ is a partial order.
\end{prop}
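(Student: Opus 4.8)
The plan is to check the three axioms of a partial order for $\trianglelefteq$ on $\mathfrak{M}(A,B)$, recalling that $S\trianglelefteq T$ means $S$ meets every $A\rightarrow T$ path. Reflexivity is free: by definition every $A\rightarrow S$ path ends in $S$ and hence meets $S$, so $S\trianglelefteq S$. For transitivity, assume $S\trianglelefteq T$ and $T\trianglelefteq U$ and take an arbitrary $A\rightarrow U$ path $P$. Since $T\trianglelefteq U$, the path $P$ meets $T$; let $t$ be the \emph{first} vertex of $P$ lying in $T$. The initial segment of $P$ from its start up to $t$ is then an $A\rightarrow T$ path (its first vertex is its only $A$-vertex and $t$ is its only $T$-vertex), so by $S\trianglelefteq T$ it meets $S$, and therefore so does $P$. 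Hence $S\trianglelefteq U$. The only point requiring care is truncating at the \emph{first} $T$-vertex, so that the truncation is genuinely an $A\rightarrow T$ path.

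Antisymmetry is the substantial part and the step I expect to be the main obstacle, since it is where the minimality of the separations must enter. Suppose $S\trianglelefteq T$ and $T\trianglelefteq S$; as $S$ and $T$ play symmetric roles in these hypotheses, it suffices to prove $T\subseteq S$. Fix $v\in T$. Because $T$ is a minimal separation, $T\setminus\{v\}$ fails to separate, so there is an $A\rightarrow B$ path $P$ with $P\cap T=\{v\}$. I will pin $v$ down by a pincer argument applying both hypotheses to this single path. From $S\trianglelefteq T$: the initial segment of $P$ up to $v$ is an $A\rightarrow T$ path, hence meets $S$, so if $s$ denotes the first vertex of $P$ in $S$, then $s$ occurs no later than $v$ along $P$. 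From $T\trianglelefteq S$: the initial segment of $P$ up to $s$ is an $A\rightarrow S$ path, hence meets $T$; but $v$ is the only $T$-vertex of $P$, so $v$ occurs no later than $s$. The two inequalities force $s=v$, whence $v\in S$. As $v\in T$ was arbitrary we obtain $T\subseteq S$, and the symmetric argument gives $S\subseteq T$, so $S=T$.

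Everything thus reduces to three path-surgery arguments, once one isolates, for each vertex of a minimal separation, an $A\rightarrow B$ path meeting that separation in exactly that vertex. Note that I would not need the earlier symmetry Proposition~\ref{simm ord} here, since both inequalities in the pincer use only the ``separates from $A$'' form of $\trianglelefteq$; the delicate bookkeeping is merely choosing first-hitting vertices so that each truncated path has the correct type.
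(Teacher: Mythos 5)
Your proof is correct, and its heart --- antisymmetry established by fixing $v\in T$, taking (by minimality of $T$) an $A\rightarrow B$ path $P$ with $P\cap T=\{v\}$, and squeezing the location of $S$ on $P$ down to $v$ --- is exactly the paper's argument. The genuine difference is in how the second half of the pincer is obtained. To rule out an $S$-vertex after the chosen $T$-vertex, the paper first invokes Proposition~\ref{simm ord} to convert $S\trianglelefteq T$ (``$S$ separates $T$ from $A$'') into ``$T$ separates $B$ from $S$'', and then argues about the terminal segment of $P$. You avoid this: applying $S\trianglelefteq T$ to the initial segment of $P$ up to $v$ shows that the first $S$-vertex $s$ of $P$ lies at or before $v$, while applying $T\trianglelefteq S$ to the initial segment up to $s$ shows that $v$ lies at or before $s$; both hypotheses are used in their raw ``separates from $A$'' form. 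So your proof of this proposition is independent of Proposition~\ref{simm ord}, which the paper's is not --- a small but real structural gain, since the symmetry proposition is then needed only later (e.g.\ in Lemma~\ref{closed sup inf}). Your explicit verification of reflexivity and transitivity, which the paper dismisses as obvious, is also right, and the detail you flag --- truncating at the \emph{first} $T$-vertex so that the truncation is genuinely an $A\rightarrow T$ path --- is indeed the only point there requiring care.
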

\begin{proof}
The reflexivity and transitivity are obvious. Let $ S,T\in \mathfrak{M}(A,B) $ and assume that $ S\trianglelefteq T $ and $ 
T\trianglelefteq S $. Let $ 
u\in T $ be arbitrary and pick an $ A\rightarrow B $ path $ P $ which meets $ T $ only at $ u $. Then $ S $ cannot have a vertex 
on $ P $ before $ u $ because $ T\trianglelefteq S $. On the other hand, $ S $ cannot have a vertex on $ P $ after $ u $ since $ 
T $ 
separates $ B $ from $ S $ (use Proposition \ref{simm ord} and $ S\trianglelefteq T $). It follows that $ u\in S $ thus $ S 
\supseteq T $ 
 and by 
minimality $ 
S=T $.
\end{proof}

\begin{prop}\label{inf char}
$ (\mathfrak{M}(A,B),\trianglelefteq) $ is a complete lattice, where for a nonempty
$ \mathcal{S}\subseteq \mathfrak{M}(A,B),\  \inf\mathcal{S} $ consists of those  
$s\in \bigcup \mathcal{S} $  which are reachable 
from 
$ A $ without touching any other element of $ \bigcup \mathcal{S} $.
\end{prop}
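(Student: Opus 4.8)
The plan is to reduce the assertion that $(\mathfrak{M}(A,B),\trianglelefteq)$ is a complete lattice to two ingredients: (i) every nonempty $\mathcal{S}\subseteq\mathfrak{M}(A,B)$ has a greatest lower bound, given by the stated formula, and (ii) the poset has a greatest element. This suffices by the standard fact that a poset admitting all nonempty infima together with a top element is automatically complete: for arbitrary $\mathcal{X}$ one sets $\sup\mathcal{X}:=\inf\{u : u\text{ is an upper bound of }\mathcal{X}\}$, the right-hand side being a nonempty infimum because the set of upper bounds contains the top. Accordingly, I fix a nonempty $\mathcal{S}$, write $U:=\bigcup\mathcal{S}$, and let $I$ be the set in the statement, namely those $s\in U$ admitting an $A\to s$ path meeting $U$ only in $s$.

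First I would check that $I\in\mathfrak{M}(A,B)$. For the separation property, take any $A\to B$ path $P$; since every $S\in\mathcal{S}$ separates and paths are finite, $P$ meets $U$, and its first vertex $s$ in $U$ lies in $I$ (the initial segment of $P$ witnesses the reachability), so $P$ meets $I$. Minimality is the step I expect to be the main obstacle. For $s\in I$ I must produce an $A\to B$ path avoiding $I\setminus\{s\}$. I would glue a witnessing clean path $P_1\colon A\to s$ (which avoids $U\setminus\{s\}\supseteq I\setminus\{s\}$) to the $s\to B$ suffix $Q_2$ of an $A\to B$ path through some $S\ni s$ meeting $S$ only in $s$; such an $S$ and such a path exist because $s\in U$ and $S$ is minimal. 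The crux is to argue $Q_2$ avoids $I\setminus\{s\}$: were some $t\in I\setminus\{s\}$ on $Q_2$, then $t\notin S$ (as $Q_2$ meets $S$ only in $s$), so a clean $A\to t$ path, which avoids $U\setminus\{t\}\supseteq S$, concatenated with the $t\to B$ part of $Q_2$ (also missing $S$) would give an $A\to B$ walk, hence an $A\to B$ path, avoiding $S$ entirely, contradicting that $S$ separates. Thus $P_1+Q_2$ is an $A\to B$ walk avoiding $I\setminus\{s\}$, and any $A\to B$ path extracted from it still avoids $I\setminus\{s\}$, so $I\setminus\{s\}$ is not a separation.

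Next I would verify that $I$ is the infimum. It is a lower bound: to see $I\trianglelefteq S$, i.e.\ that $I$ separates $S$ from $A$, I take an $A\to S$ path and observe that its first $U$-vertex lies in $I$, so the path meets $I$. It is the \emph{greatest} lower bound: if $L\trianglelefteq S$ for all $S\in\mathcal{S}$, then for any $A\to I$ path ending at $s\in I$ I choose $S\in\mathcal{S}$ with $s\in S$, and since $L$ separates $S$ from $A$ the path must meet $L$; hence $L$ separates $I$ from $A$, that is $L\trianglelefteq I$. This establishes ingredient (i), including the explicit description.

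Finally, for ingredient (ii) I would invoke symmetry. Reversing every edge of $D$ and interchanging $A$ and $B$ identifies $\mathfrak{M}_{D}(A,B)$ with $\mathfrak{M}_{D'}(B,A)$ on the same underlying set, and by Proposition \ref{simm ord} this operation reverses $\trianglelefteq$; applying ingredient (i) inside $D'$ therefore shows that every nonempty subset of $\mathfrak{M}(A,B)$ also has a \emph{supremum}. As $\mathfrak{M}(A,B)$ is nonempty (an Erd\H os--Menger separation exists, being orthogonal to a path system and hence minimal), the element $\sup\mathfrak{M}(A,B)$ is a greatest element, which is exactly ingredient (ii), completing the proof.
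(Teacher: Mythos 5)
Your proposal is correct, and its core --- verifying that the displayed set $I$ is a minimal separation and is the greatest lower bound --- follows essentially the same route as the paper: the paper likewise observes that the first $\bigcup\mathcal{S}$-vertex of any path leaving $A$ lies in $I$, and proves minimality by gluing a clean $A\to s$ path to the $s\to B$ tail $Q$ of a path meeting some $S\ni s$ only in $s$, arguing that no vertex of $Q$ after $s$ can lie in $I$; your contradiction argument is the same one, just spelled out more carefully (including the walk-to-path extraction, which the paper glosses over when it "unites" $P$ and $Q$). Where you genuinely add something is the completion step: the paper stops after the infimum computation, implicitly leaving suprema and the existence of a greatest element to its blanket convention that symmetric halves of statements are proved only once, whereas you make the reduction explicit (nonempty infima plus a top element yield completeness, and the top is obtained by running the infimum construction in the edge-reversed digraph, with Proposition \ref{simm ord} guaranteeing that reversal inverts $\trianglelefteq$). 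The one blemish is your appeal to the Aharoni--Berger theorem for the nonemptiness of $\mathfrak{M}(A,B)$: this is logically sound (that theorem is external to the paper and does not depend on this proposition), but it is a sledgehammer for a triviality and sits oddly in what the paper treats as an elementary preliminary. An elementary argument suffices: let $A_0$ be the set of first vertices of $A\to B$ paths; every $A\to B$ path meets $A_0$, and since an $A\to B$ path meets $A$ only in its first vertex, each $a\in A_0$ lies on an $A\to B$ path avoiding $A_0\setminus\{a\}$, so $A_0\in\mathfrak{M}(A,B)$ (this also covers the case where no $A\to B$ path exists, when $A_0=\varnothing$).
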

\begin{proof}
The set we claimed to be  $ \inf \mathcal{S} $, say $ S $,   separates every element of $ 
\mathcal{S} $ from $ A $. Furthermore, if a  $ T\in \mathfrak{M}(A,B) $ separates all the separations in $ \mathcal{S} $ 
from $ A $, then it separates $ S $ 
from $ 
A $ as well.   

It remains to check  the $ \subseteq $-minimality of $ S $. Let $ s\in S  $ be arbitrary. We need to find an $ A\rightarrow 
B $ path that meets $ S $ only at $ 
s $. By the definition of $ S $, there is an $ A\rightarrow s $ path $ P $ which avoids $ \bigcup \mathcal{S} \setminus 
\{ s
\} $. Pick a $ T\in \mathcal{S} $ for which $ s\in T $. Since $ T$ is a minimal separation, there is a $ S\rightarrow B $ path $ 
Q $ starting
at $ s $. The vertices $ V(Q)\setminus \{ s \} $ are not reachable from $ A $ without touching $ T$ thus they are not in $ S $. 
Hence by 
uniting $ P $ and $ Q $ we obtain a desired $ A\rightarrow B $ path.
\end{proof} 

\begin{thm}[Escalante]
If  $ \mathfrak{M}(A,B) $ has a finite element, then $ \mathfrak{S}(A,B) $ is a finite distributive sublattice of $ 
\mathfrak{M}(A,B) $.
\end{thm}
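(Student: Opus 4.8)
The plan is to collapse everything onto a single fixed maximum disjoint path system and then to recognize $\mathfrak{S}(A,B)$ as a sublattice of a product of finite chains. First I would fix the combinatorial skeleton. Since $\mathfrak{M}(A,B)$ has a finite element, Menger's theorem (applicable because a finite separation is present) yields a maximum system $\mathcal{P}_0=\{P_1,\dots,P_k\}\in\mathfrak{D}(A,B)$, where $k$ is simultaneously the maximal number of disjoint $A\rightarrow B$ paths and the minimal size of a separation. If $S\in\mathfrak{S}(A,B)$ then $S\perp\mathcal{Q}$ for some $\mathcal{Q}$, so $|S|=|\mathcal{Q}|\le k$; but $S$ is a separation, hence $|S|\ge k$, giving $|S|=k$. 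Conversely every $S\in\mathfrak{M}(A,B)$ with $|S|=k$ must meet each of the $k$ disjoint paths $P_i$, and having only $k$ vertices it meets each exactly once and lies inside $\bigcup_i V(P_i)$; thus $S\perp\mathcal{P}_0$ and $S\in\mathfrak{S}(A,B)$. So $\mathfrak{S}(A,B)$ is precisely the set of minimal separations of size $k$, each a \emph{transversal} $S=\{s_1,\dots,s_k\}$ with $s_i\in V(P_i)$, and since the $P_i$ are finite there are only finitely many of them, which already settles finiteness.

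Next I would set up the order and an embedding. Each $P_i$ carries the linear order of traversal from $A$ to $B$; write $\sigma_S(i)$ for the position of $s_i$ on $P_i$. I would prove $S\trianglelefteq T$ if and only if $\sigma_S(i)\le\sigma_T(i)$ for every $i$. For the forward direction, the initial segment of $P_i$ up to $t_i$ is an $A\rightarrow T$ path, so it must meet $S$; since $S\cap V(P_i)=\{s_i\}$ this forces $s_i$ to precede $t_i$. For the converse, if some $A\rightarrow t_j$ path avoided $S$, I would prolong it along the terminal segment of $P_j$ after $t_j$ (which avoids $S$ because $s_j$ precedes $t_j$ and the other $s_\ell$ lie off $P_j$), producing an $A\rightarrow B$ walk missing $S$, contrary to $S$ being a separation. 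Thus $S\mapsto(\sigma_S(1),\dots,\sigma_S(k))$ is an order embedding of $(\mathfrak{S}(A,B),\trianglelefteq)$ into the product of the finite chains $V(P_i)$.

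Then I would identify the meet and join inside $\mathfrak{M}(A,B)$. Using the description of $\inf$ in Proposition \ref{inf char}, I claim $\inf\{S,T\}$ is the coordinatewise-earlier transversal $\{\min(s_i,t_i)\}_i$. Each earlier vertex $e_i$ is reachable from $A$ along $P_i$ meeting $S\cup T$ only at $e_i$, so $e_i\in\inf\{S,T\}$; for the later vertex $\ell_i$, say $\ell_i=s_i$, any $A\rightarrow s_i$ path avoiding $(S\cup T)\setminus\{s_i\}$ could be prolonged along the terminal segment of $P_i$ to an $A\rightarrow B$ walk avoiding $T$, which is impossible, so $\ell_i\notin\inf\{S,T\}$. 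Hence $\inf\{S,T\}$ is a transversal of size $k$, lies in $\mathfrak{S}(A,B)$, and equals the coordinatewise minimum; by the $A\leftrightarrow B$ symmetry of Proposition \ref{simm ord}, the same surgery applied to the terminal-segment description of $\sup$ shows $\sup\{S,T\}$ is the coordinatewise maximum and also lies in $\mathfrak{S}(A,B)$. Therefore $\mathfrak{S}(A,B)$ is a finite sublattice of $\mathfrak{M}(A,B)$ whose operations are coordinatewise under the embedding, that is, a sublattice of the distributive lattice $\prod_i V(P_i)$; being a sublattice of a distributive lattice, it is itself distributive.

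I expect the main obstacle to be the middle identification step, namely excluding the later coordinate $\ell_i$ from $\inf\{S,T\}$: this is exactly the point where the global separation property, rather than mere positions along $\mathcal{P}_0$, must be invoked, and the prolongation-and-extract-a-path argument is what forces $\ell_i$ out and pins the meet to the coordinatewise minimum. Everything else is either bookkeeping with the fixed path system or a direct appeal to the already-established lattice structure of $\mathfrak{M}(A,B)$.
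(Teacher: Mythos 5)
Your proposal is correct and takes essentially the same route as the paper: both fix a maximum-sized disjoint path system $\mathcal{P}_0$, characterize $\mathfrak{S}(A,B)$ as the minimal separations orthogonal to (i.e.\ transversal to) $\mathcal{P}_0$, verify closure under $\inf$ (and $\sup$ by symmetry) via Proposition \ref{inf char}, and deduce distributivity from the product-of-chains structure carried by the paths. The only difference is one of explicitness: where you compute the coordinatewise order embedding and identify $\inf$ and $\sup$ as coordinatewise min and max by hand, the paper compresses this into the observation that $\mathfrak{S}(A,B)$ is a sublattice of $\mathfrak{M}_H(A,B)$ for $H$ the finite digraph formed by $A$, $B$ and the paths of $\mathcal{P}_0$, which is finite and distributive for exactly the reason your embedding makes explicit.
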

\begin{proof}
 Let a nonempty $ \mathcal{S}\subseteq  \mathfrak{S}(A,B) $ be given. We fix a maximal-sized element $ \mathcal{P} $ of 
 $ \mathfrak{D}(A,B) $. Note that an  $ S\in \mathfrak{M}(A,B) $ is 
 in $ \mathfrak{S}(A,B) $ iff $ S \bot \mathcal{P} $. Every vertex in $ \inf \mathcal{S} $ is 
 coming from an optimal 
separation and 
hence used by $ \mathcal{P} $. Let $ P\in \mathcal{P} $ be arbitrary and let $ s $ be the first vertex of $ P $ which is in 
$ \inf \mathcal{S} $. There is a $ S\in \mathcal{S} $ such that $ s\in S $. Since $ S\bot \mathcal{P} $, all the vertices of $ P 
$ 
after $ s $ are separated from $ A $ by $ S $ and hence cannot be in $ \inf \mathcal{S} $. Therefore $ \inf \mathcal{S}\bot 
\mathcal{P} $ which means $ S\in \mathfrak{S}(A,B) $. Finally let $ H $ be the digraph consists of $ A,B $ and the paths in 
$ \mathcal{P} $. Then $ 
\mathfrak{S}(A,B)  $ is 
a sublattice of the 
finite, 
distributive lattice $ 
\mathfrak{M}_{H}(A,B) $, 
thus it is distributive.
\end{proof}

\subsection{The complete lattice of the Erd\H os-Menger separations}
\begin{thm}\label{ABc comp latt}
For every digraph $ D=(V,E) $ and $ A,B\subseteq V $, $ \mathfrak{S}_D(A,B)  $ is a nonempty complete lattice (with 
respect to the restriction of $  \trianglelefteq $).
\end{thm}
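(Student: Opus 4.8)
The plan is to use the standard order-theoretic fact that a partially ordered set is a complete lattice as soon as every subset admits an infimum (the supremum of a family is then realised as the infimum of its upper bounds, and the empty family gives a greatest element). So after recording that $\mathfrak{S}(A,B)\neq\emptyset$ — which is exactly the content of the Aharoni--Berger theorem, producing some $\mathcal{P}\in\mathfrak{D}(A,B)$ together with an orthogonal minimal separation — it will suffice to construct, for every nonempty $\mathcal{S}\subseteq\mathfrak{S}(A,B)$, a greatest Erd\H os--Menger separation lying $\trianglelefteq$-below all of $\mathcal{S}$, and to exhibit a greatest element of $\mathfrak{S}(A,B)$. The essential point that makes this harder than Escalante's finite theorem is that the infimum we seek need \emph{not} coincide with $\inf\mathcal{S}$ computed in $\mathfrak{M}(A,B)$: reversing every edge of $D$ turns an ``$\inf$-closure'' statement for $\mathfrak{S}$ into a ``$\sup$-closure'' statement, and the two together would force $\mathfrak{S}(A,B)$ to be a genuine sublattice of $\mathfrak{M}(A,B)$, which the Appendix refutes. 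Hence the meet must be built inside $\mathfrak{S}(A,B)$.

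To organise this I would route everything through the two one-sided families. First a gluing lemma, $\mathfrak{S}(A,B)=\mathfrak{S}^{-}(A,B)\cap\mathfrak{S}^{+}(A,B)$: the inclusion $\subseteq$ is immediate by cutting each path of a witnessing $\mathcal{P}$ at the separation, whereas $\supseteq$ requires composing an $A$-linkage onto $S$ with an $S$-linkage into $B$ into a single disjoint system of $A\rightarrow B$ paths meeting each vertex of $S$ exactly once. Next I would prove that $\mathfrak{S}^{-}(A,B)$ is closed under arbitrary infima of $\mathfrak{M}(A,B)$, so that it is itself a complete lattice with the same meet, and, dually via edge reversal, that $\mathfrak{S}^{+}(A,B)$ is closed under arbitrary suprema. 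Morally both closure statements say that linkability from $A$ is inherited when a separation is pushed $\trianglelefteq$-towards $A$, and symmetrically for linkability into $B$; making this precise already needs a Menger-type (Aharoni--Berger) argument to recover a full linkage after truncating paths at the new separation.

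Granting these, I would build the meet of a nonempty $\mathcal{S}\subseteq\mathfrak{S}(A,B)$ as follows. Put $M:=\inf\mathcal{S}$ in $\mathfrak{M}(A,B)$; since $\mathcal{S}\subseteq\mathfrak{S}^{-}(A,B)$ and the latter is inf-closed, $M\in\mathfrak{S}^{-}(A,B)$, and $M$ is the largest element of $\mathfrak{S}^{-}(A,B)$ below $\mathcal{S}$. Now set $N:=\sup\{T\in\mathfrak{S}^{+}(A,B):T\trianglelefteq M\}$ in $\mathfrak{M}(A,B)$; as $\mathfrak{S}^{+}(A,B)$ is sup-closed, $N\in\mathfrak{S}^{+}(A,B)$ and $N$ is the largest member of $\mathfrak{S}^{+}(A,B)$ with $N\trianglelefteq M$. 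The crux is the compatibility assertion that $N$ remains linkable from $A$, i.e.\ $N\in\mathfrak{S}^{-}(A,B)$; by the gluing lemma this yields $N\in\mathfrak{S}(A,B)$, and since every element of $\mathfrak{S}(A,B)\subseteq\mathfrak{S}^{+}(A,B)$ lying below $\mathcal{S}$ lies below $M$ and hence below $N$, the separation $N$ is precisely $\inf_{\mathfrak{S}(A,B)}\mathcal{S}$. Running the identical argument on the edge-reversed digraph (where $\trianglelefteq$ is reversed) produces suprema in $\mathfrak{S}(A,B)$ of arbitrary nonempty families, so in particular $\sup\mathfrak{S}(A,B)$ furnishes a greatest element; as every subset then has an infimum, $\mathfrak{S}(A,B)$ is a complete lattice.

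The step I expect to be the main obstacle is this compatibility — that pushing the ``linkable-into-$B$'' co-closure of a separation back towards $A$ does not destroy linkability from $A$ — together with the hard direction of the gluing lemma, because these are exactly the places where the finite proof collapses: there one simply fixes a single maximum-sized $\mathcal{P}$ and reads off orthogonality, so that all of $\mathfrak{S}(A,B)$ is witnessed at once. In the infinite setting no single extremal path system serves every separation simultaneously, and the two linkages to be combined may share interior vertices, so the rerouting has to be carried out by augmenting along alternating walks and iterating transfinitely — precisely the style of reasoning that underlies the Aharoni--Berger theorem and on which I would lean throughout.
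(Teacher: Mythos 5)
Your overall strategy---reduce everything to the one-sided families $\mathfrak{S}^{-}$ and $\mathfrak{S}^{+}$ and build the meet from closure properties of those---is the right instinct and is also the paper's. But your key closure lemma is stated with the directions swapped, and in that form it is \emph{false}. The paper's Lemma~\ref{closed sup inf} says that $\mathfrak{S}^{+}(A,B)$ (linkable \emph{into} $B$) is closed under $\inf$, and $\mathfrak{S}^{-}(A,B)$ (linkable \emph{from} $A$) is closed under $\sup$; you claim the opposite pairing. The opposite pairing is refuted by the paper's own Appendix example: there $S,T\in \mathfrak{S}(A,B)\subseteq \mathfrak{S}^{-}(A,B)$, yet $\inf\{S,T\}=(A\setminus\{a_0\})\cup\{u,v\}$ is \emph{not} linkable from $A$, since the only way to enter $u$ or $v$ is through $a_0$, so no disjoint system of $A\rightarrow \inf\{S,T\}$ paths can cover both $u$ and $v$; dually (reverse all edges) $\mathfrak{S}^{+}$ is not $\sup$-closed. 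Your guiding heuristic, ``linkability from $A$ is inherited when a separation is pushed $\trianglelefteq$-towards $A$,'' is exactly what fails; what is true is that linkability \emph{into $B$} survives the push towards $A$. Consequently both steps of your construction collapse: $M=\inf\mathcal{S}$ need not lie in $\mathfrak{S}^{-}(A,B)$, and $N=\sup\{T\in\mathfrak{S}^{+}(A,B):T\trianglelefteq M\}$ need not lie in $\mathfrak{S}^{+}(A,B)$.

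Even if one swaps your lemma into its correct form ($M=\inf\mathcal{S}\in\mathfrak{S}^{+}$, then take the supremum of the $\mathfrak{S}^{-}$-elements below $M$), your proof still has a hole exactly where you flag ``the crux'': showing the constructed separation is also linkable on the other side. You propose to do this by ``augmenting along alternating walks and iterating transfinitely,'' i.e.\ by re-running Aharoni--Berger-type machinery; that is not a proof, and it is precisely what the paper avoids. The paper's trick is to invoke the Aharoni--Berger theorem (Theorem~\ref{inf Menger}) as a black box for the \emph{auxiliary pair} $(A,\inf\mathcal{S})$: since $\inf\mathcal{S}\in\mathfrak{S}^{+}(A,B)$, Proposition~\ref{larger AB cons} identifies $\{T\in\mathfrak{S}(A,B):T\trianglelefteq \inf\mathcal{S}\}$ with $\mathfrak{S}(A,\inf\mathcal{S})$, and Claim~\ref{larges smallest} (applied to this pair) shows that this set has a largest element---the nonemptiness of $\mathfrak{S}(A,\inf\mathcal{S})$ supplied by Aharoni--Berger, combined with the closure lemma, forces the extremal element of the one-sided family to itself be an Erd\H os--Menger separation. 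A final, minor point: the ``hard direction'' of your gluing lemma $\mathfrak{S}=\mathfrak{S}^{-}\cap\mathfrak{S}^{+}$ is actually easy and needs no rerouting, because an $A\rightarrow S$ linkage and an $S\rightarrow B$ linkage can never meet outside $S$: a common vertex off $S$ would splice into an $A\rightarrow B$ path avoiding $S$, contradicting that $S$ is a separation.
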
 
\begin{proof}

The non-emptiness of the subposet $ \mathfrak{S}(A,B)  $  of 
$ \mathfrak{M}(A,B) $ is exactly the following theorem.
\begin{thm}[R. Aharoni and E. Berger, \cite{aharoni2009menger}]\label{inf Menger}
 For any  (possibly infinite) digraph $ D=(V,E) $ and $ A,B\subseteq V$,  $ \mathfrak{S}_D(A,B)\neq \varnothing  $.
 \end{thm}

\begin{prop}\label{larger AB cons}
If $ S\in \mathfrak{S}^{+}(A,B)  $, then $ \mathfrak{S}(A,S) =\{T\in \mathfrak{S}(A,B): T \trianglelefteq S  \} $.
\end{prop} 
\begin{sbiz}
Let $ T\in  \mathfrak{S}(A,S)  $. Take a $ \mathcal{P}\in \mathfrak{D}(A,S) $ with $ T\bot \mathcal{P} $. Since $ 
S\in \mathfrak{S}^{+}(A,B)  $, we can continue forward the paths $ \mathcal{P} $ to obtain an element of $ 
\mathfrak{D}(A,B) 
$ which shows $ T\in \mathfrak{S}(A,B)  $. Assume now that $ T\in \mathfrak{S}(A,B) $ with $ T \trianglelefteq  S  
$. Take a $ 
\mathcal{Q}\in \mathfrak{D}(A,B) $ with $ T\bot \mathcal{Q} $. The initial segments of the paths  $ \mathcal{Q} $ up to $ 
S 
$ show $ T\in \mathfrak{S}(A,S) $.
\end{sbiz}

\begin{lem}\label{closed sup inf}
$ \mathfrak{S}^{+}(A,B)  $ is closed under the $ \inf $ operation of $ \mathfrak{M}(A,B) $ 
and 
$ 
\mathfrak{S}^{-}(A,B)  $ is closed under $ \sup$.
\end{lem}
\begin{sbiz}
Let $\{ S_\xi \}_{\xi<\kappa}\subseteq \mathfrak{S}^{+}(A,B)  $ be nonempty and let $ S_{<\alpha}:=\inf \{ 
S_\xi: \xi<\alpha \} $. For every $ 0<\alpha\leq\kappa $ 
and 
every $ s\in S_{<\alpha} $  we  define a path $ P_s $ that goes from $ s $ to $ B $ such that for each $ \alpha $ the paths 
$ \{ 
P_s: s\in S_{<\alpha} \} $ are disjoint and hence 
witness $ S_{<\alpha}\in \mathfrak{S}^{+}(A,B)  $.

For $ \alpha=1 $ we pick an arbitrary path-system that witnesses $ S_0\in  \mathfrak{S}^{+}(A,B)  $. If $ \alpha $ is a limit 
ordinal and $ P_s $ is defined whenever $ s\in S_{<\xi} $ for some $ \xi<\alpha $, then from the characterisation of $ \inf  $
(see Proposition \ref{inf char}) it 
follows 
that $ P_s $ is defined for every $ s\in S_{<\alpha} $. If $ s\neq s'\in S_{<\alpha} $, then for every large enough $ \xi $ we 
have
$ s,s'\in S_{<\xi} $, thus by the induction hypothesis $ P_s $ and $ P_{s'} $ are disjoint. Suppose now that $ \alpha=\beta+1 
$.
Every $ s\in S_{<\beta+1}\setminus S_{<\beta} $ is in $ S_{\beta} $ hence we may fix a
$ \{ Q_s: s\in  S_{<\beta+1}\setminus S_{<\beta}\}\in \mathfrak{D}(S_\beta,B) $ where $ Q_s $ goes from $ s $ to $ B $. 
Since $ S_{<\beta} $ separates $ 
B $ from $ S_{<\beta+1} $ (see Proposition \ref{simm ord}), each $ Q_s $ meets $ S_{<\beta} $. Assume that the first 
common vertex of 
$ Q_s $ with $ 
S_{<\beta} $ is 
$ s' $. Note that $ s'\notin S_{<\beta+1} $ because $ S_\beta $ separates $ s' \notin S_{\beta}$ from $ A $. Unite  the initial 
segment of $ Q_s $ 
up to $ s' $ and $ P_{s'} $ to obtain $ P_s $.
\end{sbiz}

\begin{claim}\label{larges smallest}
$ \mathfrak{S}(A,B)  $ has a smallest and a largest  element, namely $ \inf\mathfrak{S}^{+}(A,B)  $ and $ 
\sup\mathfrak{S}^{-}(A,B)  $.
\end{claim}
\begin{sbiz}
Let $ S:=\inf \mathfrak{S}^{+}(A,B) $. By Lemma \ref{closed sup inf}, $ S\in \mathfrak{S}^{+}(A,B)  $.  
By Proposition \ref{larger AB cons}, 
$ \mathfrak{S}(A,S) =\{T\in \mathfrak{S}(A,B): T\trianglelefteq S  \} $. Since $ \mathfrak{S}(A,B) 
\subseteq\mathfrak{S}^{+}(A,B)  $, the set $ \{T\in \mathfrak{S}(A,B): T\trianglelefteq S  \}  $ cannot have an element 
strictly smaller 
than $ S $. By  Theorem \ref{inf Menger}, $ \mathfrak{S}(A,S)\neq \varnothing $, thus its only element must be $ S $.
\end{sbiz}\\

Let $ \mathcal{S}\subseteq \mathfrak{S}(A,B)  $ be nonempty. Since $ \mathfrak{S}(A,B) \subseteq 
\mathfrak{S}^{+}(A,B)  $ and by Lemma 
\ref{closed sup inf} $ \mathfrak{S}^{+}(A,B) $ is closed under the $ \inf $ operation of $ \mathfrak{M}(A,B) $,  
 $\inf \mathcal{S}=:S\in  \mathfrak{S}^{+}(A,B)$. Being smaller or equal to 
all  the elements of $ \mathcal{S} $ means being 
smaller 
or equal to $ S $. By Proposition \ref{larger AB cons}, the lower bounds of $ S $ in $ \mathfrak{S}(A,B)  $ are exactly the 
elements 
of $ \mathfrak{S}(A,S) $ which has a largest element by Claim \ref{larges smallest}.  It is the desired largest lower bound
of $ \mathcal{S} $ with respect to the poset $ \mathfrak{S}(A,B) $.

\begin{rem}
Theorem \ref{ABc comp latt}  remains true if the graph is undirected or if we consider  cuts instead of separations. The 
proof is essentially the 
same.
\end{rem}
\end{proof}
\subsection{Representation as Erd\H os-Menger separation lattice}
\begin{thm}\label{AB-cut rep}
Every  complete lattice is representable as an Erd\H os-Menger separation lattice.
\end{thm}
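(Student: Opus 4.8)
The plan is to produce, for an arbitrary complete lattice $L$, a digraph $D$ together with $A,B\subseteq V$ and an order isomorphism from $L$ onto $(\mathfrak{S}_D(A,B),\trianglelefteq)$. Since an order isomorphism between complete lattices automatically preserves all infima and suprema, and $\mathfrak{S}_D(A,B)$ is already known to be a complete lattice by Theorem~\ref{ABc comp latt}, it suffices to build $D$ so that the Erd\H os-Menger separations are in order-preserving bijection with $L$; completeness then comes for free. A structural warning guides the construction: by Escalante's theorem, whenever $\mathfrak{M}(A,B)$ has a finite element the lattice $\mathfrak{S}(A,B)$ is finite and \emph{distributive}. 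Hence to represent a non-distributive $L$ (even a finite one such as the diamond $M_3$) the digraph must have \emph{no finite separation}, i.e.\ infinitely many disjoint $A\to B$ paths. So the construction is genuinely infinite, and this is exactly the regime for which the completeness of $\mathfrak{S}(A,B)$ was designed.

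First I would replace $L$ by a convenient concrete copy. Every complete lattice is isomorphic to its lattice of principal ideals $\{\downarrow a : a\in L\}$ ordered by inclusion, and this representation turns meet into intersection, $\bigcap_i\downarrow a_i=\downarrow\bigwedge_i a_i$, while join becomes the ideal generated by the union, $\downarrow\bigvee_i a_i$. The point of this reduction is that the characterisation of $\inf$ in Proposition~\ref{inf char} is by reachability, which behaves like intersection; matching $\inf$ to $\bigcap$ is therefore the natural target.

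Next I would build $D$ from one gadget per element of the ground set $L$. I would arrange the vertices in \emph{columns} indexed by $x\in L$, each column being a directed path $A\to\cdots\to p_x\to\cdots\to q_x\to\cdots\to B$ with an ``early'' cut-vertex $p_x$ and a ``late'' cut-vertex $q_x$, together with a canonical family $\mathcal P$ of pairwise disjoint $A\to B$ paths running through the columns. To the element $a\in L$ I would assign the separation $S_a$ that cuts column $x$ at $q_x$ when $x\le a$ and at $p_x$ when $x\not\le a$; thus the ``profile'' of $S_a$ is precisely the characteristic function of the principal ideal $\downarrow a$, and moving a cut from $p_x$ toward $q_x$ (hence toward $B$) corresponds to enlarging the separation in $\trianglelefteq$. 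With a column oriented this way, the reachability description of $\inf$ in Proposition~\ref{inf char} automatically selects the earlier vertex on each column on which two separations disagree, so $\inf\{S_a,S_b\}$ has profile $\downarrow a\cap\downarrow b=\downarrow(a\wedge b)$, matching the meet. The crucial extra ingredient is a system of cross-edges between columns that (i) forces every minimal separation orthogonal to $\mathcal P$ to have a \emph{down-closed} profile, (ii) forces that profile to be a \emph{principal} ideal, so that no cut can stop strictly below a realisable supremum, and (iii) is routed so as not to create shortcuts that merge or split the canonical paths. With such edges in place, $a\mapsto S_a$ is the desired bijection and $a\le a'\iff S_a\trianglelefteq S_{a'}$.

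The verification then splits into three obligations: each $S_a$ is a minimal $AB$-separation orthogonal to a disjoint path system, so lies in $\mathfrak S(A,B)$; no \emph{other} minimal separation is orthogonal to any disjoint path system, so $\{S_a:a\in L\}$ exhausts $\mathfrak S(A,B)$; and the map respects $\trianglelefteq$. The main obstacle is the middle obligation together with the correct treatment of \emph{infinite} suprema: I must design the cross-edges so that no ``rogue'' profile (a down-set without a maximum, or a non-down-closed choice) survives as an Erd\H os-Menger separation, while simultaneously ensuring that the $\sup$ operation realises $\downarrow\bigvee_i a_i$ rather than merely the union of profiles. Because $L$ may have no covers (for instance a dense chain), these constraints cannot be imposed cover-by-cover and must be encoded uniformly from the order relation and the $\bigvee$/$\bigwedge$ structure of $L$; proving that the resulting infinite digraph has exactly the intended separations, and that the graph-theoretic $\inf$ and $\sup$ of Proposition~\ref{inf char} agree with $\bigwedge$ and $\bigvee$ in $L$, is where essentially all of the work lies.
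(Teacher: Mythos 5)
Your write-up is a plan rather than a proof: the object on which everything hinges --- the system of cross-edges that is supposed to force every Erd\H os-Menger separation to have a down-closed, principal profile while creating no shortcuts --- is never constructed, and you yourself flag that proving the resulting digraph has exactly the intended separations ``is where essentially all of the work lies.'' That unfinished part is not a routine verification; it is essentially the entire content of a representation theorem. Forcing the surviving profiles to be exactly the principal ideals of an \emph{arbitrary} complete lattice (which may have dense chains, no covers at all, and large antichains) is precisely the hard combinatorial core of such results, and nothing in your sketch indicates how a single edge set could meet your obligations (i)--(iii) simultaneously. So there is a genuine gap: the central construction and all of its verification are missing, and your correct preliminary observations (an order isomorphism between complete lattices preserves all infima and suprema; the construction must admit no finite separation once $L$ is non-distributive) do not reduce that burden.

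The paper avoids this difficulty entirely by a reduction you did not consider. It invokes Escalante's theorem (Theorem \ref{rep mincut}): every complete lattice $L$ is already isomorphic to a minimal-separation lattice $\mathfrak{M}_D(A,B)$ for some digraph $D=(V,E)$ and $A,B\subseteq V$. It then blows up every vertex into $\kappa:=\left|V\right|+\aleph_0$ copies: $V'=V\times\kappa$, with an edge from $(u,\alpha)$ to $(v,\beta)$ iff $uv\in E$, and $A'=A\times\kappa$, $B'=B\times\kappa$. Minimality forces every $S\in\mathfrak{M}_{D'}(A',B')$ to have the form $T\times\kappa$ with $T\in\mathfrak{M}_{D}(A,B)$, so $\mathfrak{M}_{D'}(A',B')\cong\mathfrak{M}_{D}(A,B)\cong L$; and the $\kappa$-fold multiplicity of every vertex leaves enough room (via an injection $T\times\kappa\to\kappa$ choosing a private copy of each auxiliary vertex) to route pairwise disjoint $A'\to B'$ paths, one through each element of $T\times\kappa$, so that \emph{every} minimal separation of $D'$ is Erd\H os-Menger, i.e.\ $\mathfrak{S}_{D'}(A',B')=\mathfrak{M}_{D'}(A',B')$. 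In other words, instead of engineering a graph in which the Erd\H os-Menger separations carve out the right subfamily of $\mathfrak{M}$, the paper makes the two families coincide and outsources the lattice representation to the cited theorem. To salvage your approach you would have to either carry out your column-and-cross-edge construction in full (in effect reproving Escalante's theorem with an added orthogonality requirement) or adopt this reduction.
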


\begin{proof}
We reduce our theorem to the following theorem of Escalante.
\begin{thm}[Escalante, \cite{escalante1972schnittverbande}]\label{rep mincut}
For every complete lattice $ L $, there is a digraph $ D=(V,E) $ and $ A,B\subseteq V $ such that 
$ \mathfrak{M}_D(A,B) $ is isomorphic to $ L $.
\end{thm}
\begin{rem}
 For an English 
source, 
see Theorem 6 on page 157 of \cite{sauer1993finite}. It has been formulated originally for undirected graphs.
\end{rem}

Let $ L $ be a given complete lattice. First we choose $ D=(V,E),A,B $ according to Theorem \ref{rep mincut}. The only 
thing we need to do is to blow up the vertices of this system.  Indeed, consider 
$ V'=V\times \kappa $ where $ \kappa:=\left|V\right| +\aleph_0$ and 
draw an edge from $ (u,\alpha)$  to $(v,\beta) $ iff $ uv\in E $  to obtain $ D'=(V',E') $. We define $ A' $ to be $ A\times 
\kappa $ 
and 
$ B' $ to be $ B\times \kappa $. 

Note that if $(v,\alpha)\in S\in \mathfrak{M}_{D'}(A',B') $ then necessarily 
$ \{ v \}\times \kappa \subseteq S $ otherwise $ (v,\alpha) $  would be omittable in $ S $ contradicting its $ \subseteq 
$-minimality. It implies that $ T'\in  \mathfrak{M}_{D'}(A',B') $ iff there is a $ T\in 
\mathfrak{M}_{D}(A,B) $ 
such 
that $ T'=T\times \kappa $. Therefore $ \mathfrak{M}_{D'}(A',B')\cong \mathfrak{M}_{D}(A,B) $. It is enough to show 
that  $ \mathfrak{M}_{D'}(A',B')=\mathfrak{S}_{D'}(A',B')  $. To prove the non-trivial inclusion, take an arbitrary $ 
T'\in \mathfrak{M}_{D'}(A',B') $. Then $ T'=T\times \kappa $ for some  $T\in \mathfrak{M}_{D}(A,B) $.
For $ t\in T $, we can pick an $ A \rightarrow B $ path  $ P_t=v_0,\dots ,v_i, t, v_{i+1}, \dots, v_{n_t} $ in $ D $ where $ 
v_j\notin T $. Take an injection $ f: T \times \kappa \rightarrow \kappa $. Let $ P_{(t,\alpha)} $ that we obtain from $ P_t $ 
by replacing $ t $ with
$ (t,\alpha) $ and $ v_j $ by $ (v_j, f(t,\alpha)) $. It is easy to check that the path-system $ \{ P_{(t,\alpha)}: (t,\alpha)\in T' \} 
$ exemplifies $ T'\in \mathfrak{S}_{D'}(A',B') $.
\end{proof}

\subsection{Appendix}\label{appendix}

We show that $ \mathfrak{S}_D(A,B)  $ is not necessarily a sublattice of $ \mathfrak{M}_D(A,B) $.  Consider the 
digraph $ D $ and vertex sets $ A,B $ at Figure 
\ref{figur1}.
 We have $ S:=\{ \dots, b_{-2}, b_{-1},u, a_1, a_2, \dots \}\in \mathfrak{S}(A,B)  $ witnessed by
 \[ \{\dots a_{-2}b_{-2},a_{-1}b_{-1},a_0u b_1, a_1b_2, a_2 b_3, \dots   \}\in \mathfrak{D}(A,B).  \] We also have 
 $ T:=\{ \dots, a_{-2}, a_{-1},v, b_1, b_2, \dots \}\in \mathfrak{S}(A,B)  $ witnessed by
  \[ \{\dots a_{-2}b_{-3},a_{-1}b_{-2},a_0v b_{-1}, a_{1}b_{1}, a_{2} b_{2}, \dots   \}\in \mathfrak{D}(A,B).  \]
 Here 
$ \inf \{ S,T \}=(A\setminus \{ a_0 \})\cup \{ u,v \} $.
   
   But an $ A\rightarrow B $ path through $ u $ must start at $ a_0$ as well as an $ A\rightarrow B $ path through $ v $, 
   thus  $ (A\setminus \{ a_0 \})\cup \{ u,v \}\notin \mathfrak{S}(A,B) $.
\begin{figure}[H]
 \centering
 
 \begin{tikzpicture}

  \node at (-0.5,1.7) {$ A $};
 \node at (2.5,1.7) {$ B $};
 
\draw  (-1,1.5) rectangle (0,-4.5);

 \node (v1) at (-0.5,-1.5) {$ a_0 $};


 \node (v2) at (1,-1) {$ u $};

 \node (v3) at (1,-2) {$ v $};

 \draw  (2,1.5) rectangle (3,-4.5);

 \node (v6) at (-0.5,1) {$ \vdots $};

 \node (v8) at (-0.5,0.5) {$ a_3 $};

 \node (v12) at (-0.5,-0.5) {$ a_1 $};

 \node (v10) at (-0.5,0) {$ a_2 $};

 \node (v14) at (-0.5,-2.5) {$ a_{-1}$};

 \node (v16) at (-0.5,-3) {$ a_{-2} $};

 \node (v18) at (-0.5,-3.5) {$ a_{-3} $};

 \node at (-0.5,-3.8) {$ \vdots $};

 \node (v7) at (2.5,1) {$ \vdots $};

 \node (v11) at (2.5,0.5) {$ b_3 $};

 \node (v13) at (2.5,0) {$ b_2 $};

 \node (v4) at (2.5,-0.5) {$ b_{1} $};

 \node (v5) at (2.5,-2.5) {$ b_{-1} $};

 \node (v15) at (2.5,-3) {$ b_{-2}$};

 \node (v17) at (2.5,-3.5) {$ b_{-3}$};

 \node at (2.5,-3.8) {$ \vdots $};

 \draw  (v1) edge[->] (v2);

 \draw  (v1) edge[->] (v3);

 \draw  (v2) edge[->] (v4);

 \draw  (v3) edge[->] (v5);

 \draw  (v12) edge[->] (v13);

 \draw  (v12) edge[->] (v4);

 \draw  (v10) edge[->] (v13);

  \draw  (v10) edge[->] (v11);

 \draw  (v8) edge[->] (v11);

 \draw  (v14) edge[->] (v5);

 \draw  (v14) edge[->] (v15);

 \draw  (v16) edge[->] (v15);

 \draw  (v16) edge[->] (v17);

 \draw  (v18) edge[->] (v17);

 \end{tikzpicture}
 \caption{A system with $ S,T\in \mathfrak{S}(A,B)  $ where  
 $ \inf \{ T,S \}\notin \mathfrak{S}(A,B)  $ }\label{figur1}
 \end{figure}
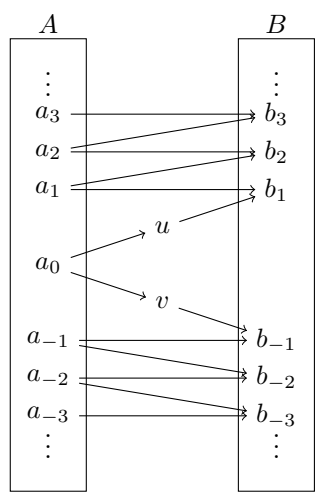

\begin{bibdiv}
\begin{biblist}
\bib{escalante1972schnittverbande}{article}{
   author={Escalante, Fernando},
   title={Schnittverb\"{a}nde in Graphen},
   language={German},
   journal={Abh. Math. Sem. Univ. Hamburg},
   volume={38},
   date={1972},
   pages={199--220},
   issn={0025-5858},
   review={\MR{0314698}},
   doi={10.1007/BF02996932},
}

\bib{escalante1974note}{article}{
   author={Escalante, F.},
   author={Gallai, T.},
   title={Note \"{u}ber Kantenschnittverb\"{a}nde in Graphen},
   language={German},
   journal={Acta Math. Acad. Sci. Hungar.},
   volume={25},
   date={1974},
   pages={93--98},
   issn={0001-5954},
   review={\MR{0376437}},
   doi={10.1007/BF01901751},
}
\bib{sauer1993finite}{collection}{
   title={Finite and infinite combinatorics in sets and logic},
   series={NATO Advanced Science Institutes Series C: Mathematical and
   Physical Sciences},
   volume={411},
   booktitle={Proceedings of the NATO Advanced Study Institute held in
   Banff, Alberta, April 21--May 4, 1991},
   editor={Sauer, N. W.},
   editor={Woodrow, R. E.},
   editor={Sands, B.},
   publisher={Kluwer Academic Publishers Group, Dordrecht},
   date={1993},
   pages={xviii+453},
   isbn={0-7923-2422-6},
   review={\MR{1261192}},
   doi={10.1007/978-94-011-2080-7},
}
\bib{aharoni2009menger}{article}{
   author={Aharoni, Ron},
   author={Berger, Eli},
   title={Menger's theorem for infinite graphs},
   journal={Invent. Math.},
   volume={176},
   date={2009},
   number={1},
   pages={1--62},
   issn={0020-9910},
   review={\MR{2485879}},
   doi={10.1007/s00222-008-0157-3},
}		
\bib{halin1989graphentheorie}{book}{
   author={Halin, Rudolf},
   title={Graphentheorie},
   language={German},
   edition={2},
   publisher={Wissenschaftliche Buchgesellschaft, Darmstadt},
   date={1989},
   pages={322},
   isbn={3-534-10140-5},
   review={\MR{1068314}},
}
	
\end{biblist}
\end{bibdiv}

\end{document}